\definecolor{red}{rgb}{1,0,0}
\theoremstyle{plain}
\newtheorem{thm}{Theorem}
\newtheorem{obs}[thm]{Observation}
\newtheorem*{namedtheorem}{\theoremname}
\newcommand{\theoremname}{testing}
\theoremstyle{definition}
\newtheorem{rem}[thm]{Remark}
\def\N{\mathbb{N}}
\def\hat{\widehat}
\DeclareMathOperator{\Alt}{Alt}
\newcommand{\intfunc}{i}
\newcommand{\CG}{\mathcal G} 
\newcommand{\comment}[1]{}
\begin{document}
\bibliographystyle{plain}


\title{\textbf{On groups with slow intersection growth}}
\author{Martin Kassabov \and Francesco Matucci}
\maketitle


\begin{abstract}
Intersection growth concerns the asymptotic behavior of
the index of the intersection of all subgroups of a group that have index at most $n$.
In this note we show that the intersection growth of some groups may not be a nicely behaved function
by showing the following seemingly contradictory results:
(a) for any group $G$ the intersection growth function $i_G(n)$  is super linear infinitely often; and
(b) for any increasing function $f$ there exists a group $G$ such that
$i_G$ below $f$ infinitely often.
\end{abstract}


\section{Introduction}

Let $\CG$ be a class of subgroups of a group $\Gamma$.
We define the $\CG$-\emph{intersection growth function} of $\Gamma $ by letting $\intfunc^\CG_\Gamma(n)$ be the index of the intersection of all $\CG$-subgroups of $\Gamma $ with index at most $n $.   In symbols,
$$
\intfunc^\CG_\Gamma(n) := [\Gamma : \Lambda^\CG_\Gamma(n)]
,\ \ \  \text { where } \ \Lambda^\CG_\Gamma(n) := \bigcap_{[\Gamma: \Delta] \leq n, \Delta \in \CG} \Delta.
$$
Here, $\CG $ will always be either the class of all subgroups, the class $\lhd$ of normal subgroups or the class $\max \lhd $ of maximal normal subgroups of $\Gamma $, i.e.\ those subgroups that are maximal among normal subgroups.  The corresponding intersection growth functions will then be written $i_{\Gamma}  (n)$, $i_{\Gamma} ^\lhd (n) $ and $i_{\Gamma} ^ {\max\lhd} (n) $.   

Intersection growth has been first defined by Biringer, Bou-Rabee and the authors
in~\cite{matucci12} where it has been studied for free groups and some arithmetic groups
and a connection has been drawn between intersection growth and the residual finiteness
growth, which will be mentioned at the end of this introduction.

The aim of this note is to build examples of groups whose intersection growth behaves
slowly at certain integers and much faster at others.
We will assume all groups in this note to be finitely generated. For a group $\Gamma$,
let $R(\Gamma)$ denote the intersection of all finite index subgroups of $\Gamma$. Obviously,
$i^\CG_\Gamma(n)=i^{\mathcal{G}}_{\Gamma/R(\Gamma)}(n)$, so we can assume that $R(\Gamma)=\{1\}$,
i.e.
\emph{we will assume $\Gamma$ to be finitely generated and residually finite}.

\begin{obs}\label{thm:easy-observation}
For any infinite residually finite group $\Gamma$ 
one has that $i_\Gamma(n) \geq n$ for infinitely many
positive integers $n$.
\end{obs}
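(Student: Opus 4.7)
The plan is to deduce the statement from two simple ingredients: (i) for any finitely generated residually finite infinite group $\Gamma$, the set
\[
S(\Gamma) := \{\, [\Gamma : \Delta] : \Delta \le \Gamma,\ [\Gamma : \Delta] < \infty \,\}
\]
is unbounded, and (ii) whenever $n \in S(\Gamma)$, one automatically has $\intfunc_\Gamma(n) \ge n$.

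Step (ii) is essentially a tautology: if $\Delta \le \Gamma$ has index exactly $n$, then $\Delta$ is one of the subgroups whose intersection defines $\Lambda_\Gamma(n)$, so $\Lambda_\Gamma(n) \subseteq \Delta$, and therefore
\[
\intfunc_\Gamma(n) = [\Gamma : \Lambda_\Gamma(n)] \ge [\Gamma : \Delta] = n.
\]
So the content of the observation is entirely in step (i), and this is the step I expect to require the only real argument. To prove it, suppose for contradiction that every finite-index subgroup of $\Gamma$ has index at most some $N$. By M.\ Hall's theorem, a finitely generated group has only finitely many subgroups of index at most $N$; call them $\Delta_1, \dots, \Delta_k$. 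Their intersection $\Delta_1 \cap \cdots \cap \Delta_k$ still has finite index in $\Gamma$, and by hypothesis it equals the intersection of \emph{all} finite-index subgroups of $\Gamma$. Since $\Gamma$ is residually finite, this intersection is trivial, forcing $\Gamma$ to be finite, a contradiction. Hence $S(\Gamma)$ is unbounded, and in particular infinite.

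Combining (i) and (ii), for each of the infinitely many integers $n \in S(\Gamma)$ we get $\intfunc_\Gamma(n) \ge n$, which proves the observation.

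The only subtle point is invoking the finite generation hypothesis at the right place (it is needed for M.\ Hall's finiteness statement, but is explicitly available since the paper assumes all groups to be finitely generated). No genuine obstacle is expected.
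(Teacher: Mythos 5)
Your proposal is correct and takes essentially the same route as the paper: both arguments rest on the tautology that a subgroup of index exactly $n$ forces $i_\Gamma(n)\ge n$, combined with the fact that an infinite residually finite group has finite-index subgroups of arbitrarily large index. The only difference is that you justify the unboundedness via M.~Hall's theorem, which is where finite generation enters (harmless under the paper's standing assumption), whereas the paper asserts it directly; in fact it holds for arbitrary infinite residually finite groups, since a finite-index normal subgroup of maximal index would intersect down to a larger-index one unless it is trivial.
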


The main result of this note states that, in some sense, the  opposite is also true.

\begin{thm}
\label{thm:main-theorem}
For any strictly increasing function $f: \N \to \N$ there exists a finitely generated
residually finite group $\Gamma$
such that $i_{\Gamma}(n) < f(n)$ for infinitely many positive integers $n$.
\end{thm}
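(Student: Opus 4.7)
The plan is to construct, for the given strictly increasing $f$, a 2-generated residually finite group $\Gamma=\Gamma_f$ whose finite quotients form a very sparse tower, so that between consecutive levels the intersection $\Lambda_\Gamma(n)$ stays constant and $i_\Gamma(n)$ is strictly below $f(n)$ on long intervals of integers $n$. Concretely, I would choose pairwise non-isomorphic non-abelian finite simple groups $S_k=\mathrm{Alt}(m_k)$, $k\geq 1$, picked recursively so that $m_{K+1}>f\bigl(|S_1|\cdots|S_K|\bigr)$ for every $K$; this is available because $\mathrm{Alt}(m)$ has minimal faithful permutation degree $m$ and so the $m_k$ may be made to grow as fast as desired. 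I would then realize this tower inside a 2-generated dense subgroup $\Gamma\leq\prod_{k\geq 1}S_k$ equipped with surjective compatible projections $\pi_K\colon\Gamma\twoheadrightarrow S_1\times\cdots\times S_K$; the existence of such a $\Gamma$ follows from standard profinite generation results in the spirit of Wilson or Kassabov--Nikolov. Residual finiteness of $\Gamma$ is automatic from the embedding, and one obtains the distinguished normal subgroups $\ker\pi_K$ of index $|S_1|\cdots|S_K|$.

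The heart of the argument is then the identification $\Lambda_\Gamma(n)=\ker\pi_K$ whenever $|S_1|\cdots|S_K|\leq n<m_{K+1}$. The inclusion $\Lambda_\Gamma(n)\subseteq\ker\pi_K$ is immediate, since $\ker\pi_K$ is itself a subgroup of index at most $n$. For the reverse inclusion, let $H\leq\Gamma$ have index at most $n$. For every $k>K$ the image $\pi_k(H)$ is a subgroup of $S_k$ of index $\leq n<m_k=\mathrm{mindeg}(S_k)$, so $\pi_k(H)=S_k$. Fixing $K'>K$ and writing $A=S_1\times\cdots\times S_K$, $B=S_{K+1}\times\cdots\times S_{K'}$, the image $\pi_{K'}(H)\leq A\times B$ surjects onto $B$; since the $S_k$ are pairwise non-isomorphic and non-abelian simple, Goursat's lemma applied iteratively forces $\pi_{K'}(H)=\pi_K(H)\times B$, equivalently $H\cdot\ker\pi_{K'}=\pi_K^{-1}(\pi_K(H))$ for every $K'>K$. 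Invoking the congruence-type property that every finite-index subgroup of $\Gamma$ contains some $\ker\pi_{K''}$ (built into the construction), one concludes $H=\pi_K^{-1}(\pi_K(H))\supseteq\ker\pi_K$, as required.

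Granting this identification, $i_\Gamma(n)=|S_1|\cdots|S_K|$ throughout the above interval. Since a strictly increasing $f\colon\N\to\N$ automatically satisfies $f(n)\geq n$, every integer $n$ with $|S_1|\cdots|S_K|<n<m_{K+1}$ yields
$$
i_\Gamma(n)\ =\ |S_1|\cdots|S_K|\ <\ n\ \leq\ f(n),
$$
and varying $K$ produces infinitely many such $n$. The main obstacle is the congruence-type property invoked at the end of the second paragraph: one must construct $\Gamma$ so that its abstract finite-index subgroups all arise as preimages of subgroups of the finite quotients $S_1\times\cdots\times S_K$. Producing a 2-generated $\Gamma$ enjoying this property is the technical backbone of the argument and relies on a careful choice of generators for the dense embedding $\Gamma\hookrightarrow\prod_k S_k$.
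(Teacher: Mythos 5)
Your strategy is the same as the paper's: realize $\prod_k \Alt(m_k)$, with $m_k$ growing fast relative to $f$, as the profinite completion of a finitely generated residually finite group $\Gamma$, and then read off $i_\Gamma$ on the long gaps between $|S_1|\cdots|S_K|$ and $m_{K+1}$ using the fact that $\Alt(m)$ has no proper subgroup of index less than $m$. However, the step you defer --- the ``congruence-type property'' that every finite-index subgroup of $\Gamma$ contains some $\ker\pi_{K''}$, equivalently that the natural surjection $\widehat{\Gamma}\to\prod_k S_k$ is an isomorphism --- is not something that follows from ``standard profinite generation results'' or a careful choice of generators for a dense embedding, and asserting it is ``built into the construction'' leaves a genuine gap: this is the entire difficulty of the theorem. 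Density of a $2$-generated subgroup is cheap, but a dense finitely generated subgroup of the product typically has many finite-index subgroups that are invisible in the product (constructions of Pyber and of Neumann give exactly such examples), and whether \emph{any} finitely generated group has a Cartesian product of alternating groups as its full profinite completion was open for some time. The paper's proof consists of the elementary computation of $i$ inside the product (Observation~\ref{thm:product-alternating-groups}) together with a citation of the Kassabov--Nikolov theorem (Theorem~\ref{thm:weak-kassabov-nikolov}) that settles precisely this point; your argument is incomplete unless you invoke (or reprove) that theorem, rather than treating it as a routine back-end to the construction.

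Two secondary remarks. First, your Goursat step is not justified by pairwise non-isomorphism of the $S_k$ alone: a subgroup of $A\times B$ surjecting onto a simple factor $B$ can be a diagonal over an isomorphic copy of $B$ inside $A$ (e.g.\ the graph of an isomorphism from an $\Alt(5)$ subgroup of $\Alt(6)$ onto $\Alt(5)$), even though $A\not\cong B$. What saves you is that your growth condition forces $|S_j|>|S_1|\cdots|S_K|$ for $j>K$, so the later factors cannot occur as sections of $A$; but in fact, once $\widehat{\Gamma}\cong\prod_k S_k$ is known, the whole identification of $\Lambda_\Gamma(n)$ reduces to the straightforward computation in the product and the Goursat analysis is unnecessary. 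Second, to guarantee an integer $n$ with $|S_1|\cdots|S_K|<n<m_{K+1}$ you should ask for $m_{K+1}>f\bigl(|S_1|\cdots|S_K|\bigr)+1$ (or argue at $n=|S_1|\cdots|S_K|$ with a shifted $f$), since $f(n)\geq n$ alone does not rule out the degenerate case $m_{K+1}=|S_1|\cdots|S_K|+1$; this is trivially repaired.
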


\begin{rem}
We observe that Observation~\ref{thm:easy-observation} and Theorem~\ref{thm:main-theorem} are still true if we replace $i_{\Gamma}(n)$ with either
$i_{\Gamma}^{\lhd}(n)$ or $i_{\Gamma}^{\max\lhd}(n)$. The proofs are easy adaptations
of the ones we give below and we omit them to keep this note short.
\end{rem}

Observation~\ref{thm:easy-observation} and Theorem~\ref{thm:main-theorem} show
that intersection growth functions may behave in a bad way. More precisely,
these functions cannot be approximated via regular functions
(polynomial, exponential, etc.) as they have different behavior at different integers.

The main idea is to construct a group with a very limited number of finite quotients and this is achieved by
taking a suitable direct product of a family of simple groups.

Recall that for a residually finite finitely generated
group $\Gamma=\langle X \rangle$, one can define the \emph{residual finiteness growth function}
$F_{\Gamma,X}(n)$
as the minimal natural number $N$ such that any element of word length $\le n$
with respect to $X$ can be detected in a quotient $Q$ of cardinality $\le N$
(see Bou-Rabee~\cite{bourabee1} for an introduction).
All the constructions in this paper can immediately be translated to get similar information about
residual finiteness growth. See Bou-Rabee and Seward~\cite{bourabee-seward1}
for a similar result about building examples with
arbitrarily large residual finiteness growth functions.
We also mention the work of Kharlampovich, Myasnikov and Sapir~\cite{kharl-myasni-sapir-1}
who build finitely presented solvable examples with arbitrarily large
residual finiteness growth functions.

Finally, we recall that for a group
$\Gamma$, one defines $r_{\Gamma}(n)$ to be the number of
inequivalent finite irreducible $n$-dimensional representations of $G$
(assuming this number is finite). One defines $R_{\Gamma}(n) = \sum_{k=1}^n r_{\Gamma}(k)$
to be the \emph{representation growth function} of $\Gamma$.
Craven~\cite{craven1} showed analogues of
Observation~\ref{thm:easy-observation} and Theorem~\ref{thm:main-theorem}
for $R_{\Gamma}(n)$.
Unlike Observation~\ref{thm:easy-observation},
Craven's upper bound Theorem G requires a more careful and involved construction.

\section{Proofs of the results}
\label{sec:proofs}

\begin{proof}[Proof of Observation~\ref{thm:easy-observation}]
Since $\Gamma$ is infinite and residually finite for every $n$ there exists a finite index subgroup $H_N < \Gamma$ of index 
$[\Gamma:H_N] = M > n$. This clearly implies that $\Lambda_\Gamma(N) \leq H_N$, i.e., $i_{\Gamma}(N) \geq N$.
\end{proof}

\begin{obs}
\label{thm:product-alternating-groups}
Let $\{n_i\}$ be a strictly increasing sequence of positive integers, such that $n_1 \geq 5$.
If $\mathfrak{G} := \prod_{i=1}^{\infty} \Alt(n_i)$, then
\[
i_\mathfrak{G}(n_k-1)= \prod_{i<k} |\Alt(n_k)|.
\]
\end{obs}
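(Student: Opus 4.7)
The plan is to prove $\Lambda_\mathfrak{G}(n_k-1) = K$, where $K := \prod_{i\geq k}\Alt(n_i)$; since $[\mathfrak{G}:K]=\prod_{i<k}|\Alt(n_i)|$, the stated formula will follow.

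For the containment $\Lambda_\mathfrak{G}(n_k-1) \leq K$, I would exhibit enough small-index subgroups whose intersection is $K$. For each $j \in \{1,\ldots,k-1\}$ and each $t \in \{1,\ldots,n_j\}$, put $H_{j,t} := \pi_j^{-1}(\mathrm{Stab}_{\Alt(n_j)}(t))$, where $\pi_j : \mathfrak{G} \to \Alt(n_j)$ is the coordinate projection. Each $H_{j,t}$ has index $n_j \leq n_{k-1} < n_k$, so it lies in the family defining $\Lambda_\mathfrak{G}(n_k-1)$. Since the point stabilizers of $\Alt(n_j)$ on $\{1,\ldots,n_j\}$ have trivial joint intersection, $\bigcap_t H_{j,t} = \ker \pi_j$, and intersecting further over $j<k$ gives $\bigcap_{j<k}\ker \pi_j = K$.

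For the reverse containment $K \leq \Lambda_\mathfrak{G}(n_k-1)$, I would take an arbitrary $H \leq \mathfrak{G}$ with $m := [\mathfrak{G}:H] \leq n_k-1$ and analyze the coset action $\phi : \mathfrak{G} \to S_m$ with kernel $N := \ker\phi \leq H$. For $i \geq k$, the simple group $\Alt(n_i)$ cannot embed into $S_m$ because its minimal faithful permutation degree equals $n_i > m$; so $\phi(\Alt(n_i))=1$, hence $\Alt(n_i) \leq N \leq H$ for every $i \geq k$. The main obstacle is to upgrade these coordinate-wise containments to the containment $K \leq N$ of the entire product, since the individual factors merely generate the direct sum $\bigoplus_{i\geq k}\Alt(n_i)$, not $K$ itself. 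The decisive input is that $\mathfrak{G}$, equipped with its natural product topology, is strongly complete as a profinite group---every finite-index subgroup is open---which holds for products of non-abelian finite simple groups whose orders grow sufficiently fast. Strong completeness forces $N$ to contain a basic open subgroup $\ker\pi_F := \prod_{i \notin F}\Alt(n_i)$ for some finite $F \subseteq \N$; a second application of the minimal-degree argument to the embedding $\mathfrak{G}/N \hookrightarrow S_m$, after shrinking $F$ to be minimal, shows that every $i \in F$ satisfies $n_i \leq m < n_k$, forcing $F \subseteq \{1,\ldots,k-1\}$ and hence $K \leq \ker\pi_F \leq N \leq H$.

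Combining the two containments yields $\Lambda_\mathfrak{G}(n_k-1) = K$ and the claimed formula $i_\mathfrak{G}(n_k-1) = \prod_{i<k}|\Alt(n_i)|$. The hardest step is the appeal to strong completeness, which rules out pathological finite-index subgroups of $\mathfrak{G}$ (of ultrafilter type, say) that are not detected by any finite sub-product of coordinate factors.
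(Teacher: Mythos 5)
Your proof is correct and follows essentially the same route as the paper: the reverse inclusion via pullbacks of point stabilizers in the coordinates $i<k$ is identical, and your lower bound rests on the same key fact that $\Alt(n_i)$ admits no proper subgroup (equivalently, no faithful permutation action) of index less than $n_i$, so any subgroup of index at most $n_k-1$ must contain every factor with $i\ge k$. The only difference is bookkeeping: the paper works with closed finite-index subgroups and passes to the closure of $\bigoplus_{i\ge k}\Alt(n_i)$, citing Nikolov--Segal to cover arbitrary finite-index subgroups, while you invoke the same strong completeness explicitly (justified here because the factors are pairwise non-isomorphic, making $\mathfrak{G}$ topologically finitely generated, rather than because the orders ``grow fast'') to reduce to a basic open subgroup $\prod_{i\notin F}\Alt(n_i)$.
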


\begin{proof}
The group $\mathfrak{G}$ is profinite and endowed with the product topology.
Notice that for profinite groups is natural to define intersection growth with respect to
finite index closed subgroups. In this case by
Nikolov and Segal~\cite{nikolov-segal-1}
every subgroup of finite index is closed, but we do need to use this
fact. We observe the following:
\begin{enumerate}
\item if $H \le \mathfrak{G}$ has index $\le n_k-1$,
then $H \cap \Alt(n_k)$ has index $\le n_k-1$ inside $\Alt(n_k)$,
\item $\Alt(n_k)$  has no nontrivial subgroups
of index $\le n_k-1$, but it has subgroup of index $n_k$.
\end{enumerate}
Therefore, if $H \le \mathfrak{G}$ is a closed subgroup of  index $\le n_k-1$, then $H \cap \Alt(n_i)=\Alt(n_i)$,
for any $i \ge k$. This implies that
$$
H \ge \overline{\bigoplus_{i\ge k} \Alt(n_i)} = \prod_{i\ge k} \Alt(n_i)
$$
and therefore
$\Lambda_{\mathfrak{G}}(n_k-1) \ge \prod_{i\ge k} \Alt(n_i)$.
It is very easy to see the opposite inclusion
(since the group $\prod_{i\ge k} \Alt(n_i)$ can be realized as the intersection
of point stabilizers of the actions of $\mathfrak{G}$ via $\Alt(n_i)$ on $n_i$ points for $i \leq k$), 
which concludes the proof.
\end{proof}

This observation shows that if the sequence $\{n_k\}$ grows very quickly then, in some sense,
the function $i_\mathfrak{G}$ is very small at certain values.

\begin{rem}
The group $\mathfrak{G}$ constructed in Observation~\ref{thm:product-alternating-groups}
is not finitely generated, but it is finitely generated as a profinite group.
\end{rem}

The idea is to construct a finitely generated group whose profinite completion is the same as
the group $\mathfrak{G}$ constructed in Observation~\ref{thm:product-alternating-groups}.
Since $\mathfrak{G}$ is finitely generated as a profinite group it is very easy to find finitely generated dense subgroup $\Gamma \subseteq \mathfrak{G}$. Such an embedding will give a surjective map $\pi: \hat \Gamma \to  \mathfrak{G}$, however this map is often not injective because $\Gamma$ might have finite index subgroups which which are not visible in $\mathfrak{G}$~\cite{pyber1,Neumann1} and can not be used to obtain information about $i_\Gamma$. If the map $\pi$ is an isomorphism it is very easy to see that
$i_\Gamma = i_\mathfrak{G}$ (see~\cite{matucci12}).

For some time it was not known if it is possible to find $\Gamma$ such that the map $\pi$ is an isomorphism. This question was settled in 
Theorem~1.2 in~\cite{kas-niko-1},
which we restate here in the weaker form we need.

\begin{thm}[Kassabov-Nikolov, \cite{kas-niko-1}]
\label{thm:weak-kassabov-nikolov}
For any strictly increasing sequence $\{n_k\}$,
the Cartesian product
$\prod_{k=1}^{\infty} \Alt(n_k)$ is a profinite completion
of a finitely generated residually finite group.
\end{thm}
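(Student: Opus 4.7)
The plan is to construct a finitely generated (in fact $2$-generated) dense subgroup $\Gamma$ of $\mathfrak{G} := \prod_k \Alt(n_k)$ such that the natural continuous surjection $\pi : \hat{\Gamma} \twoheadrightarrow \mathfrak{G}$ from the profinite completion is an isomorphism. Residual finiteness of $\Gamma$ is automatic since $\Gamma$ embeds into the profinite group $\mathfrak{G}$. The content of the theorem thus reduces to two subproblems: (i) producing a dense $2$-generated $\Gamma$, and (ii) showing that $\Gamma$ has no ``hidden'' finite quotients, i.e.\ every finite-index normal subgroup of $\Gamma$ is of the form $\Gamma \cap U$ for some open $U \unlhd \mathfrak{G}$.

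For (i), one uses that each $\Alt(n_k)$ with $n_k \ge 5$ is $2$-generated and chooses generators $\sigma_k, \tau_k \in \Alt(n_k)$ whose discrete invariants (orders, cycle structures of $\sigma_k$, $\tau_k$, $\sigma_k \tau_k$, etc.) vary systematically with $k$, so that no two factors admit any ``diagonal'' identification respecting the chosen generating pairs. Setting $\sigma = (\sigma_k)$, $\tau = (\tau_k)$, and $\Gamma = \langle \sigma, \tau\rangle$, a Goursat-lemma argument based on the simplicity and pairwise non-isomorphism of the factors, together with the distinctness of the invariants, shows that the projection $\Gamma \to \prod_{k \le N} \Alt(n_k)$ is surjective for every $N$. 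Hence $\Gamma$ is dense in $\mathfrak{G}$ and $\pi$ is surjective.

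The main obstacle is (ii). Given a finite quotient $\phi : \Gamma \twoheadrightarrow Q$, one must show $\phi$ factors through some finite subproduct $\prod_{k \le N} \Alt(n_k)$. Since $\Gamma$ surjects onto each simple nonabelian $\Alt(n_k)$, the group $\Gamma$ is perfect, so $Q$ is perfect and has no abelian composition factors; thus the composition factors of $Q$ are finite nonabelian simple groups. The key rigidity step is to argue that each such composition factor must arise from a single projection $\Gamma \to \Alt(n_k)$, and that for $N$ large enough the whole projection onto $\prod_{k > N}\Alt(n_k)$ lies in $\ker\phi$. This is the delicate heart of the argument: it exploits both the careful combinatorial choice of the $(\sigma_k, \tau_k)$ (which prevents a finite quotient from ``mixing'' different factors) and structural results about finite-index subgroups in Cartesian products of simple groups in the spirit of Nikolov--Segal. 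Once this rigidity is in hand, every finite-index normal subgroup of $\Gamma$ is visible in $\mathfrak{G}$, so $\pi$ is injective and hence an isomorphism, proving the theorem.
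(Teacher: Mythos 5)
First, note that the paper does not prove this statement at all: it is imported verbatim (in weakened form) from Kassabov--Nikolov \cite{kas-niko-1}, so there is no internal proof to compare against; your attempt has to stand on its own, and as written it has genuine gaps. The frame (find a finitely generated dense $\Gamma \le \mathfrak{G}=\prod_k \Alt(n_k)$ with $\pi:\hat{\Gamma}\to\mathfrak{G}$ an isomorphism, residual finiteness being automatic) is the right one and matches the discussion in the paper, and your Goursat argument does give density --- in fact, since the factors are pairwise non-isomorphic simple groups, surjecting onto each factor already gives surjectivity onto every finite subproduct, so the ``distinct invariants'' are not needed there. The problems are in step (ii). The assertion ``since $\Gamma$ surjects onto each simple nonabelian $\Alt(n_k)$, the group $\Gamma$ is perfect'' is false: the free group of rank $2$ surjects onto every $2$-generated simple group and is not perfect. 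This is not a cosmetic slip --- for a generic choice of generating pairs $(\sigma_k,\tau_k)$ the group $\Gamma$ will simply be free of rank $2$, hence dense with $\hat{\Gamma}$ the free profinite group, vastly larger than $\mathfrak{G}$. So density plus ``non-mixing invariants'' cannot by itself rule out hidden finite quotients, and nothing in your sketch explains what property of the chosen pairs would.

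The remaining heart of the argument is then only named, not proved: you say the rigidity ``exploits the combinatorial choice of generators and structural results in the spirit of Nikolov--Segal,'' but Nikolov--Segal controls finite-index subgroups of the \emph{profinite} group $\mathfrak{G}$, not of the abstract dense subgroup $\Gamma$, which is exactly where the difficulty lies (the paper itself points out that dense finitely generated subgroups typically have finite-index subgroups invisible in $\mathfrak{G}$, citing \cite{pyber1,Neumann1}). The actual Kassabov--Nikolov proof does not take arbitrary generating pairs with varying invariants; it builds $\Gamma$ so that, among other things, it contains the restricted direct sum $\bigoplus_k \Alt(n_k)$ and uses uniform bounded-width statements (commutator width and product-of-conjugates results in alternating groups) to force every finite-index subgroup of $\Gamma$ to be the trace of an open subgroup of $\mathfrak{G}$. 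Without an argument of that kind --- or some substitute mechanism making $\Gamma$ perfect and killing all quotients not factoring through finite subproducts --- your outline does not yield injectivity of $\pi$, so the proof is incomplete at its central step.
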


We are now ready to prove Theorem~\ref{thm:main-theorem}.

\begin{proof}[Proof of Theorem~\ref{thm:main-theorem}]
We choose $n_1=5$ and, inductively, we define $n_k$ in the following way:
given $n_k$, let $n_{k+1}$ be an integer large enough such that
$\prod_{i \le k} |\Alt(n_k)| < \min\{n_k+1, f(n_{k+1}-1)\}$.

The chosen sequence allows us to construct the group $\mathfrak{G}$ of
Observation~\ref{thm:product-alternating-groups}.
By Theorem~\ref{thm:weak-kassabov-nikolov} we have that $\mathfrak{G}=\widehat{\Gamma}$,
for a finitely generated residually finite group $\Gamma$.
As was observed above
$i_{\Gamma}=i_{\mathfrak{G}}$ and so
$i_{\Gamma}(n_k-1)= \prod_{i<k} |\Alt(n_k)|$.
By construction
$i_{\Gamma}(n_k-1) < f(n_k-1)$ for any $k>1$ and we are done.
\end{proof}

\section{Acknowledgements}
The first author was partially supported by NSF grants DMS-0900932 and DMS-1303117.
The second author gratefully acknowledges the Fondation Math\'ematique Jacques Hadamard (FMJH - ANR - Investissement d'Avenir) for the support received during the development of this work.
We are grateful to Khalid Bou-Rabee for helpful mathematical conversations.

\bibliography{go}


\noindent
Martin Kassabov \\
Department of Mathematics, Cornell University\\
Malott Hall, Ithaca, NY 14850, USA\\
E-mail: kassabov@math.cornell.edu\\

\noindent
Francesco Matucci \\
D\'epartement de Math\'ematiques, Facult\'e des Sciences d'Orsay, \\
Universit\'e Paris-Sud 11, B\^atiment 425, Orsay, France \\
E-mail: francesco.matucci@math.u-psud.fr

\end{document}